\definecolor{DarkBlue}{rgb}{0,0,0.8} 
\definecolor{DarkGreen}{rgb}{0,0.5,0.0} 
\definecolor{DarkRed}{rgb}{0.9,0.0,0.0} 
\newtheorem*{thm*}{Theorem}
\numberwithin{equation}{section}
\newtheorem{thm}[equation]{Theorem}
\newtheorem{lem}[equation]{Lemma}
\newtheorem{cor}[equation]{Corollary}
\theoremstyle{definition}
\newtheorem{ex}[equation]{Example}	
\newtheorem{remark}[equation]{Remark}
\newcommand{\ZZ}{\mathbf{Z}}
\title{A Unified Enumeration of $1$-dimension Garden Algebras and Valise Adinkras}
\author{Yan X Zhang}
\address{Yan X Zhang,
Dept. of Mathematics,
San Jose State University,
San Jose, CA 95192}
\email{yan.x.zhang@sjsu.edu}
\begin{document}

\pagestyle{plain}

\begin{abstract}
In the study of supersymmetry in one dimension, various works enumerate sets of generators of \emph{garden algebras} $GR(d,N)$ (and equivalently, \emph{valise Adinkras}) for special cases $N = d = 4$ and $N = d = 8$, using group-theoretic methods and computer computation. We complement this work by enumerating the objects for arbitrary $N$ and $d$ via a formula in a streamlined manner.
\end{abstract}

\maketitle

\section{Introduction}
\label{sec:introduction}
In a series of works by Gates et al. (\cite{gates:genomics}, \cite{bellucci2006journey}, \cite{chappell20134d}, etc.), the $GR(d,N)$ ``garden algebras'' and their visual representations ``valise Adinkras'' have been used to study supersymmetry in $1$ ($0$ spacial and $1$ time) dimension. Some of the recent research was on the enumeration and classification of these objects. Chapell, Gates, and H\"ubsch \cite{chappell2014adinkra} and Randall \cite{randall} used computers to enumerate these matrices for $N = d = 4$ and $N = d= 8$. More recently, Gates, H\"ubsch, Iga, and Mendez-Diez \cite{gates-counting} proved these results algebraically, without computer aid, by coset enumeration. The proofs used properties of the Klein-$4$ group and did not generalize directly to higher parameters. The main numerical results of \cite{gates-counting} can be seen in Figure~\ref{fig:previous}.

\begin{figure}
\begin{center}
\begin{tabular}{c|c|c}
& $N = d =4$ & $N = d =8$ \\
\hline
$\{|L_1|, \ldots, |L_N|\}$ & 6 & 151200 \\
$\{L_1, \ldots, L_N\}$ & 1536 & 79272345600 \\
\end{tabular}
\end{center}
\caption{Previous results in \cite{gates-counting}. The first row refers to sets of ``permutation generators,'' which are equivalence classes of sets of generators of garden algebras under ``forgetting'' of signs. The second row refers to sets of generators proper for the garden algebras. \label{fig:previous}}
\end{figure}

In this note, we give a unified enumeration of these objects for arbitrary $N$ and $d$ and recover the above results as special cases. The main idea is simply connecting some mathematical groundwork we laid in our previous work \cite{zhang:adinkras} and Gaborit's exploration of enumeration of codes \cite{gaborit1996mass}.  We assert that while our work gives a more general enumeration, it does not subsume these other works, which offer particular insights for the special cases that our work do not provide. Rather, we think of our work as a general complement to the existing work. We give some preliminaries in Section~\ref{sec:prelim} and our main proof in Section~\ref{sec:proof}.

\section{Preliminaries of Garden Algebras and Adinkras}
\label{sec:prelim}

We review the essentials of garden algebras and Adinkras. Adinkras are basically visual encodings of garden algebras, with some additional data. The interested reader should refer to \cite{d2l:first} for the introduction of Adinkras into the physics literature, or our \cite{zhang:adinkras} for a more mathematical treatment of Adinkras. In short, these objects encode a special class of representations of the supersymmetry algebra that have many nice properties. As this is a paper mainly about enumeration, we do not focus on the physics background.

A \emph{permutation} matrix is a square matrix with exactly one $1$ in each row and each column, with all other entries equalling $0$. A \emph{signed permutation matrix} is a square matrix with exactlye one nonzero element, which must be $1$ or $-1$, in each row and each column. We define the $GR(d,N)$ \emph{garden algebras} to be algebras generated by $N > 0$ $d \times d$ signed permutation matrices $\{L_1, \ldots, L_N\}$ which satisfy the relations:
\begin{align*}
  L_i L_j^T + L_j L_i^T & = 2 \delta_{ij} I \\
  L_i^T L_j + L_j^T L_i & = 2 \delta_{ij} I.
\end{align*}
Here, $I$ denotes the identity matrix and $T$ denotes transpose. Call such a $\{L_1, \ldots, L_N\}$ a \emph{set of generators} for $GR(d,N)$. We can similarly define a \emph{list of generators} $(L_1, \ldots, L_N)$ if we choose to remember the order of the elements, with the obvious $N!$ to $1$ correspondence between lists and sets. Given a signed permutation matrix $M$, let $|M|$ be the permutation matrix where every entry is $1$ if the corresponding entry of $M$ is $1$ or $-1$, and $0$ otherwise. We call the list $(|L_1|, \ldots, |L_N|)$ arising from a list of generators $(L_1, \ldots, L_N)$  an \emph{unsigned list of generators} of $M$. Unsigned lists of generators do not necessarily satisfy nice relations; they come up for the ease of classifying (signed) lists of generators.

\begin{ex}
\label{ex:matrices}
One possible list of generators for $GR(4,4)$ is $(L_1, L_2, L_3, L_4)$ respectively equalling
\[
\begin{bmatrix} 0 & 0 & 1 & 0 \\
0 & 0 & 0 & 1 \\
1 & 0 & 0 & 0 \\
0 & 1 & 0 & 0 \end{bmatrix}, \begin{bmatrix} 0 & -1 & 0 & 0 \\
1 & 0 & 0 & 0 \\
0 & 0 & 0 & -1 \\
0 & 0 & 1 & 0 \end{bmatrix},  \begin{bmatrix} 0 & 0 & 0 & 1 \\
0 & 0 & -1 & 0 \\
0 & -1 & 0 & 0 \\
1 & 0 & 0 & 0 \end{bmatrix}, \begin{bmatrix} 1 & 0 & 0 & 0 \\
0 & 1 & 0 & 0 \\
0 & 0 & -1 & 0 \\
0 & 0 & 0 & -1 \end{bmatrix}.
\]
We can check that the matrices obey the desired relations. The unsigned list of generators corresponding to this list is $(|L_1|, |L_2|, |L_3|, |L_4|)$ respectively equalling
\[
\begin{bmatrix} 0 & 0 & 1 & 0 \\
0 & 0 & 0 & 1 \\
1 & 0 & 0 & 0 \\
0 & 1 & 0 & 0 \end{bmatrix}, \begin{bmatrix} 0 & 1 & 0 & 0 \\
1 & 0 & 0 & 0 \\
0 & 0 & 0 & 1 \\
0 & 0 & 1 & 0 \end{bmatrix}, \begin{bmatrix} 0 & 0 & 0 & 1 \\
0 & 0 & 1 & 0 \\
0 & 1 & 0 & 0 \\
1 & 0 & 0 & 0 \end{bmatrix}, \begin{bmatrix} 1 & 0 & 0 & 0 \\
0 & 1 & 0 & 0 \\
0 & 0 & 1 & 0 \\
0 & 0 & 0 & 1 \end{bmatrix}.
\]
\end{ex}

An $N$-dimensional \emph{chromotopology} is a finite connected simple graph $G$ such that:

\begin{itemize}
\item $G$ is $N$-regular (every vertex has exactly $n$ incident edges) and bipartite;
\item The edges of $G$ are colored by $n$ colors such that every vertex is incident to exactly one edge of each color;
\item We assume the colors come with an ordering; that is, we can label the colors with the integers $1$ through $n$;
\item For any distinct colors $i$ and $j$, the edges in $G$ with colors $i$ and $j$ form a disjoint union of $4$-cycles. 
\end{itemize}

A canonical example of a chromotopology is an $n$-dimensional Hamming cube, where there are $2^n$ vertices labeled by the length-$n$ bitstrings, and the edges correspond to pairs of vertices with Hamming distance $1$. \emph{Adinkras}\footnote{All Adinkras in our paper refer to $1$-dimensional Adinkras working in $1$-d supersymmetry, so we suppress the dimensional adjective.} are defined to be chromotopologies with $2$ additional pieces of data: 
\begin{itemize}
\item A \emph{ranking} of a chromotopology $G$ is a map $h$ from the vertices of $G$ to $\ZZ$ that satisfies certain restraints. For our note, we limit ourselves to the \emph{valise ranking}, which simply means having $h(v) \in \{0,1\}$ for every vertex $v$ and having every edge $(x,y)$ in $G$ satisfy $\{h(x), h(y)\} = \{0,1\}$. In other words, a valise ranking is equivalent to a bipartition of $G$. We visualize this by putting the vertices into two rows, each row corresponding to one of the parts of the bipartition. Thus, a \emph{valise ranked chromotopology} (or \emph{valise Adinkra}) means a chromotopology (or Adinkra) with a valise ranking.
\item A \emph{dashing} of a chromotopology $G$ is a map $d$ from the edges of $G$ to $\ZZ_2$ such that the sum of $d(e)$ as $e$ runs over each $2$-colored $4$-cycle (that is, a $4$-cycle of edges that use a total of $2$ colors) is $1 \in \ZZ_2$; alternatively, every $2$-colored $4$-cycle contains an odd number of $1$'s. We typically draw a dash for each edge marked $1$ and a solid edge for each edge marked $0$.
\end{itemize}

\begin{remark}
When we restrict to valise rankings, our problem is fundamentally equivalent to some sort of classification of lists of generators of Clifford algebras/groups. Elements of $GR(d,N)$ ``basically'' satisfy the constraints of being generators for Clifford algebras. Generalizations of the relations to the more complicated supersymmetry algebra correspond to Adinkras with more complex ranking functions than the valise ranking.  
\end{remark}

\begin{figure}[htb]
\begin{center}

\begin{tabular}{c|c|c}
\begin{tikzpicture}[scale=0.10]
\SetVertexNoLabel
\SetUpEdge[labelstyle={draw}]
\Vertex[x=0,y=0]{111}
\Vertex[x=20,y=0]{011}
\Vertex[x=0,y=20]{101}
\Vertex[x=20,y=20]{001}
\Vertex[x=-10,y=10]{110}
\Vertex[x=10,y=10]{010}
\Vertex[x=-10,y=30]{100}
\Vertex[x=10,y=30]{000}
\Edge[color=red](100)(101)
\Edge[color=red](000)(001)
\Edge[color=red](010)(011)
\Edge[color=red](110)(111)
\Edge[color=green](000)(100)
\Edge[color=green](001)(101)
\Edge[color=green](010)(110)
\Edge[color=green](011)(111)
\Edge[color=blue](000)(010)
\Edge[color=blue](001)(011)
\Edge[color=blue](100)(110)
\Edge[color=blue](101)(111)
\end{tikzpicture}
&
\begin{tikzpicture}[scale=0.15]
\SetVertexNoLabel
\SetUpEdge[labelstyle={draw}]
\Vertex[x=0,y=20]{111}
\Vertex[x=0,y=10]{101}
\Vertex[x=20,y=20]{010}
\Vertex[x=20,y=10]{000}
\Vertex[x=-10,y=10]{110}
\Vertex[x=10,y=10]{011}
\Vertex[x=-10,y=20]{100}
\Vertex[x=10,y=20]{001}
\Edge[color=red](100)(101)
\Edge[color=red](000)(001)
\Edge[color=red](010)(011)
\Edge[color=red](110)(111)
\Edge[color=green](000)(100)
\Edge[color=green](001)(101)
\Edge[color=green](010)(110)
\Edge[color=green](011)(111)
\Edge[color=blue](000)(010)
\Edge[color=blue](001)(011)
\Edge[color=blue](100)(110)
\Edge[color=blue](101)(111)
\end{tikzpicture}
&
\begin{tikzpicture}[scale=0.15]
\SetVertexNoLabel
\SetUpEdge[labelstyle={draw}]
\Vertex[x=0,y=20]{111}
\Vertex[x=0,y=10]{101}
\Vertex[x=20,y=20]{010}
\Vertex[x=20,y=10]{000}
\Vertex[x=-10,y=10]{110}
\Vertex[x=10,y=10]{011}
\Vertex[x=-10,y=20]{100}
\Vertex[x=10,y=20]{001}
\Edge[color=red,style=dashed](100)(101)
\Edge[color=red](000)(001)
\Edge[color=red](010)(011)
\Edge[color=red,style=dashed](110)(111)
\Edge[color=green](000)(100)
\Edge[color=green](001)(101)
\Edge[color=green](010)(110)
\Edge[color=green](011)(111)
\Edge[color=blue, style=dashed](000)(010)
\Edge[color=blue](001)(011)
\Edge[color=blue, style=dashed](100)(110)
\Edge[color=blue](101)(111)
\end{tikzpicture}
\end{tabular}
\caption{Left to right: a chromotopology, a valise ranked chromotopology, an (valise) Adinkra, obtained in sequence by adding more and more structure. \label{fig:initial examples}}
\end{center}
\end{figure}
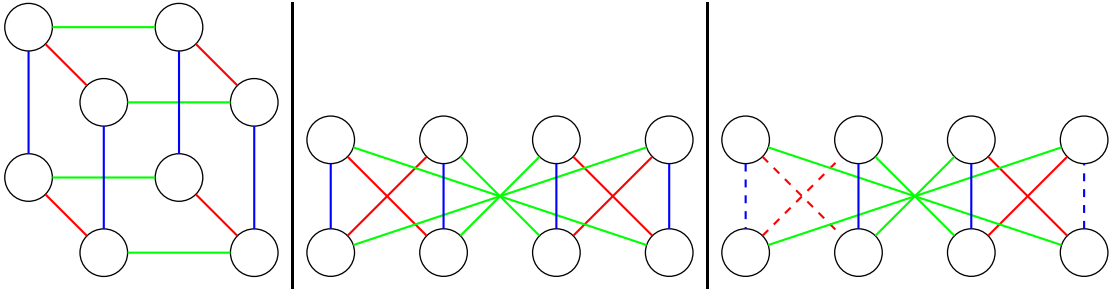

For an example of these objects, see Figure~\ref{fig:initial examples}. We call an Adinkra (or a ranked chromotopology) \emph{row-ordered} if it is equipped with an ordering of the vertices in each row of the ranking. Lists of generators of garden algebras and row-ordered Adinkras are related precisely by the following fact:
\begin{lem}
\label{lem:main equivalence}
There is a bijection between length-$N$ lists of generators $(L_1, \ldots, L_N)$ of $GR(d,N)$ and $N$-dimensional row-ordered valise Adinkras with $2d$ vertices.
\end{lem}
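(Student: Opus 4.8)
The plan is to realize the bijection explicitly, reading off from each signed permutation matrix $L_i$ both the color-$i$ edges and their dashing, and then translating the two defining relations into the chromotopology and dashing axioms.

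First I would set up the bookkeeping. Given $(L_1,\dots,L_N)$, I take $2d$ vertices in two rows of size $d$: a \emph{bottom} row indexed by the column indices $1,\dots,d$ and a \emph{top} row indexed by the row indices $1,\dots,d$. This single choice supplies both the valise ranking (the two rows are the bipartition) and the row-ordering (the indices $1,\dots,d$). For each $i$ let $\sigma_i\in S_d$ be the permutation with $(L_i)_{\sigma_i(b),b}\ne 0$, so that column $b$ has its unique nonzero entry in row $\sigma_i(b)$. I then draw a color-$i$ edge joining bottom vertex $b$ to top vertex $\sigma_i(b)$, and I mark it dashed exactly when $(L_i)_{\sigma_i(b),b}=-1$. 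In the reverse direction a row-ordered valise Adinkra supplies, for each color $i$, a perfect matching between the two indexed rows (hence a permutation $\sigma_i$) together with a dashing, and I reconstruct $L_i$ by placing $+1$ or $-1$ in entry $(\sigma_i(b),b)$ according as the edge is solid or dashed. These assignments are visibly inverse, so the content lies entirely in checking that each side satisfies its defining constraints.

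Next I would dispatch the chromotopology axioms. Bipartiteness and $N$-regularity are immediate, since edges run only between rows and each $\sigma_i$ is a bijection, so every vertex meets exactly one edge of each color. The substantive axiom is the $2$-colored $4$-cycle condition. For $i\ne j$ the relation gives $L_iL_j^T=-L_jL_i^T$, i.e.\ $L_iL_j^T$ is antisymmetric; taking absolute values shows $|L_i|\,|L_j|^T$ is the permutation matrix of $\sigma_j\sigma_i^{-1}$, and antisymmetry forces the diagonal to vanish and the underlying permutation to be an involution, so $\sigma_j\sigma_i^{-1}$ is a fixed-point-free involution. Since the cycles of the two-color subgraph on colors $i,j$ have length twice the cycle length of $\sigma_j\sigma_i^{-1}$, this is precisely the statement that every such cycle is a $4$-cycle. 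The degenerate relations $i=j$ read $L_iL_i^T=I=L_i^TL_i$, which hold automatically for signed permutation matrices and impose nothing beyond ``color $i$ is a matching.''

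The heart of the argument, and the step I expect to demand the most care, is matching the signs to the dashing. Around a $4$-cycle through top vertices $w,w'$ and bottom vertices $b=\sigma_i^{-1}(w)$ and $b''=\sigma_i^{-1}(w')$, the four edge-signs are exactly the entries appearing in the off-diagonal identity $(L_iL_j^T)_{ww'}=-(L_iL_j^T)_{w'w}$; because each entry is $\pm1$, this identity says precisely that the product of the four signs around the cycle is $-1$, which is the requirement that the cycle carry an odd number of dashes. Running this backward, the odd-dashing condition forces antisymmetric signs and hence the relation, so the two maps preserve all constraints. Finally I would check that the companion relations $L_i^TL_j+L_j^TL_i=2\delta_{ij}I$ encode the same $4$-cycles and the same parity, now read along columns rather than rows; verifying that they reduce to the identical combinatorial condition — so that the two families of relations are jointly equivalent to, and not stronger than, the chromotopology-plus-dashing axioms — is the one bookkeeping point that I would take care to confirm in both directions.
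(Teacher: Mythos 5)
Your construction is exactly the one in the paper's proof sketch (a color-$i$ edge for each nonzero entry of $L_i$ between the corresponding top-row and bottom-row vertices, dashed when the entry is $-1$), and your verification that the relations $L_iL_j^T+L_jL_i^T=2\delta_{ij}I$ translate into the $2$-colored $4$-cycle condition with an odd number of dashes correctly supplies the ``remaining technical details'' that the paper explicitly omits. The argument is sound, including the reduction of the second family of relations $L_i^TL_j+L_j^TL_i=2\delta_{ij}I$ to the same combinatorial condition read from the other row.
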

\begin{proof}[Proof Sketch]
Starting with a list of generators, we can obtain a row-ordered Adinkra via the following procedure: for each $\pm 1$ in row $j$ and column $k$ of matrix $L_i$, draw an edge of color $i$ (here, we use the orderings of the colors) from the $j$-th node in the top row to the $k$-th node in the bottom row. Dash the edge if the entry is $-1$ and draw a solid edge if the entry is $1$. We can now check that the relations of the $GR(d,N)$ algebra are in fact equivalent to the $2$-colored $4$-cycle condition of Adinkras. The converse is obvious from the construction. We omit the remaining technical details.
\end{proof}

Every Adinkra has an underlying ranked chromotopology (just forget about the dashings). However, not all ranked chromotopologies can be made into an Adinkra. A chromotopology is called \emph{adinkraizable} if it is the underlying chromotopology of some Adinkra. We define an \emph{ARC} to be an adinkraizable ranked chromotopology, which is exactly the data of an Adinkra minus the dashings. We say that an ARC (or Adinkra) is \emph{color-unordered} if it is \textbf{not} equipped with an ordering of the $n$ colors; one may think of a color-unordered ARC as an equivalence class of ARC's where two ARC's are considered equivalent if we can permute colors to get from one to the other. An alternative is to think of a color-unordered ARC as an ARC that only knows about the \textbf{partition} of its edges into the $n$ colors. We can tweak Lemma~\ref{lem:main equivalence} to suit alternate constraints:
\begin{itemize}
\item Forgetting about signs of the matrices in the construction of Lemma~\ref{lem:main equivalence} is the same as forgetting about the dashings of the Adinkras, which gives ARC's instead of Adinkras..
\item Counting equivalence classes of lists of generators by allowing the $S_d \times S_d$ action of separately permuting the rows and columns of the matrices is the same as forgetting about the orderings of the vertices in each row of the row-ordered valise Adinkra, which gives (non-row-ordered!) valise Adinkras.
\item Counting sets of generators instead of lists of generators is the same as forgetting about the order of the generators, which in turn is the same as forgetting about the ordering of the colors of an Adinkra/ARC (in other words, counting color-unordered Adinkras/ARC's).
\end{itemize}

Recall from Section~\ref{sec:introduction} that the authors of \cite{chappell2014adinkra}, \cite{gates-counting}, and \cite{randall} are interested in signed (resp. unsigned) sets of generators of $GR(d,N)$. However, they also do not care about row and column permutations of the matrices, as those do not really change the underlying physics. By the discussion above, this amounts to counting $N$-dimensional color-unordered valise Adinkras (resp. ARC's) with $2d$ vertices; that will be our goal in Section~\ref{sec:proof}.

We end with few important ideas from \cite{zhang:adinkras}. The intuition behind these ideas were already implicit from early works in the field.
\begin{itemize}
\item Adinkraizable chromotopologies are in bijection with \emph{$(N,k)$- doubly-even codes}; that is, for every $N$-dimensional chromotopology $G$, there is exactly one $k$-dimensional $\ZZ_2$-subspace of $\ZZ_2^N$ such that every vector has the number of $1$'s divisible by $4$. We call $k$ the \emph{code-dimension} of the corresponding $GR(d,N)$ algebra / chromotopology. 
\item If $C$ is the $(N,k)$ doubly-even code associated with an Adinkra, the vertices of the chromotopology are in bijection with cosets of $\ZZ_2^N$ of form $c+C$, where $2$ cosets have an edge connecting them if and only if there exist coset representatives $c_1$ and $c_2$ respectively (i.e. the cosets can be rewritten $c_1 +C$ and $c_2+C$) with $c_1$ and $c_2$ having Hamming distance $1$. When $C$ is trivial, this definition simply recovers the Haming cube. Note that this bijection implies that the chromotopology must have $2d = 2^{N-k}$ vertices, a power of $2$; this is not an obvious fact from the definitions of chromotopologies.
\end{itemize}

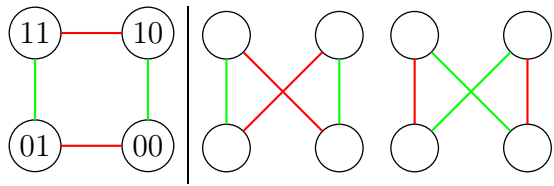
\begin{figure}[htb]
\begin{center}

\begin{tabular}{c|cc}
\begin{tikzpicture}[scale=0.15]
\SetUpEdge[labelstyle={draw}]
\Vertex[x=0,y=0]{01}
\Vertex[x=0,y=10]{11}
\Vertex[x=10,y=0]{00}
\Vertex[x=10,y=10]{10}
\Edge[color=red](10)(11)
\Edge[color=red](00)(01)
\Edge[color=green](00)(10)
\Edge[color=green](01)(11)
\end{tikzpicture} 
&
\begin{tikzpicture}[scale=0.15]
\SetVertexNoLabel
\SetUpEdge[labelstyle={draw}]
\Vertex[x=0,y=0]{01}
\Vertex[x=0,y=10]{11}
\Vertex[x=10,y=10]{00}
\Vertex[x=10,y=0]{10}
\Edge[color=red](10)(11)
\Edge[color=red](00)(01)
\Edge[color=green](00)(10)
\Edge[color=green](01)(11)
\end{tikzpicture} 
&
\begin{tikzpicture}[scale=0.15]
\SetVertexNoLabel
\SetUpEdge[labelstyle={draw}]
\Vertex[x=0,y=0]{01}
\Vertex[x=10,y=10]{11}
\Vertex[x=0,y=10]{00}
\Vertex[x=10,y=0]{10}
\Edge[color=red](10)(11)
\Edge[color=red](00)(01)
\Edge[color=green](00)(10)
\Edge[color=green](01)(11)
\end{tikzpicture} 
\end{tabular}
\caption{Left: the unique chromotopology (with Hamming-cube induced labels to aid the reader; we stress that the labels are \textbf{not} part of the data considered in this paper) for $N=2$ and $d=2$. Right: the $2$ different row-ordered valise ARC's we can obtain with this chromotopology. They belong to the same equivalence class under color permutation, so there is only one color-unordered row-ordered valise ARC. \label{fig:N=2 d=2}}
\end{center}
\end{figure}

\begin{ex}
\label{ex:N=2}
Consider $N = 2$ and $d = 2$, which corresponds to $k = 0$. Here, the code is trivial as it is $0$-dimensional, so the underlying chromotopology is the Hamming $2$-cube. We observe that there are $2$ different row-ordered valise ARC's, as seen in Figure~\ref{fig:N=2 d=2}. After picking an order of colors (say, $L_1$ corresponding to green and $L_2$ corresponding to red), these two row-ordered ARC's correspond to the two different possible lists of generators $(|L_1|, |L_2|)$, which are $(\begin{bmatrix}1 & 0 \\ 0 & 1\end{bmatrix}, \begin{bmatrix}0 & 1 \\ 1 & 0\end{bmatrix})$ and $(\begin{bmatrix}0 & 1 \\ 1 & 0\end{bmatrix}, \begin{bmatrix}1 & 0 \\ 0 & 1\end{bmatrix})$. As an example, the left row-ordered valise ARC has color $1$ (green) matching the first (resp. second) vertex of the top row with the first (resp. second) vertex of the bottom row, so $|L_1|$ is the identity matrix for that chromotopology.  For each row-ordered valise ARC, there are $8$ possible dashings to make them into row-ordered valise Adinkras, corresponding to the fact that there $8$ ways to obtain a list of generators $(L_1, L_2)$ from an unsigned list of generators $(|L_1|, |L_2|)$. See Figure~\ref{fig:N=2 dashings}. Note that the two ARC's are equivalent under exchanging the two colors. This corresponds to the fact that the two corresponding unsigned lists of generators $(|L_1|, |L_2|)$ are permutations of each other, so there is exactly $1$ unsigned set of generators $\{|L_1|, |L_2|\}$ and $8$ (signed) sets of generators $\{L_1, L_2\}$.
\end{ex}

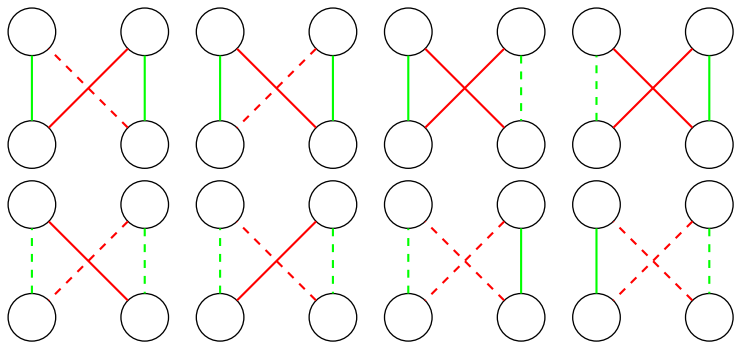
\begin{figure}[htb]
\begin{center}

\begin{tabular}{cccc}
\begin{tikzpicture}[scale=0.15]
\SetVertexNoLabel
\SetUpEdge[labelstyle={draw}]
\Vertex[x=0,y=0]{01}
\Vertex[x=0,y=10]{11}
\Vertex[x=10,y=10]{00}
\Vertex[x=10,y=0]{10}
\Edge[color=red, style=dashed](10)(11)
\Edge[color=red](00)(01)
\Edge[color=green](00)(10)
\Edge[color=green](01)(11)
\end{tikzpicture} 
&
\begin{tikzpicture}[scale=0.15]
\SetVertexNoLabel
\SetUpEdge[labelstyle={draw}]
\Vertex[x=0,y=0]{01}
\Vertex[x=0,y=10]{11}
\Vertex[x=10,y=10]{00}
\Vertex[x=10,y=0]{10}
\Edge[color=red](10)(11)
\Edge[color=red, style=dashed](00)(01)
\Edge[color=green](00)(10)
\Edge[color=green](01)(11)
\end{tikzpicture} 
& 
\begin{tikzpicture}[scale=0.15]
\SetVertexNoLabel
\SetUpEdge[labelstyle={draw}]
\Vertex[x=0,y=0]{01}
\Vertex[x=0,y=10]{11}
\Vertex[x=10,y=10]{00}
\Vertex[x=10,y=0]{10}
\Edge[color=red](10)(11)
\Edge[color=red](00)(01)
\Edge[color=green, style=dashed](00)(10)
\Edge[color=green](01)(11)
\end{tikzpicture} 
& 
\begin{tikzpicture}[scale=0.15]
\SetVertexNoLabel
\SetUpEdge[labelstyle={draw}]
\Vertex[x=0,y=0]{01}
\Vertex[x=0,y=10]{11}
\Vertex[x=10,y=10]{00}
\Vertex[x=10,y=0]{10}
\Edge[color=red](10)(11)
\Edge[color=red](00)(01)
\Edge[color=green](00)(10)
\Edge[color=green, style=dashed](01)(11)
\end{tikzpicture} 
\\
\begin{tikzpicture}[scale=0.15]
\SetVertexNoLabel
\SetUpEdge[labelstyle={draw}]
\Vertex[x=0,y=0]{01}
\Vertex[x=0,y=10]{11}
\Vertex[x=10,y=10]{00}
\Vertex[x=10,y=0]{10}
\Edge[color=red](10)(11)
\Edge[color=red, style=dashed](00)(01)
\Edge[color=green,style=dashed](00)(10)
\Edge[color=green,style=dashed](01)(11)
\end{tikzpicture} 
& 
\begin{tikzpicture}[scale=0.15]
\SetVertexNoLabel
\SetUpEdge[labelstyle={draw}]
\Vertex[x=0,y=0]{01}
\Vertex[x=0,y=10]{11}
\Vertex[x=10,y=10]{00}
\Vertex[x=10,y=0]{10}
\Edge[color=red, style=dashed](10)(11)
\Edge[color=red](00)(01)
\Edge[color=green,style=dashed](00)(10)
\Edge[color=green,style=dashed](01)(11)
\end{tikzpicture} 
& 
\begin{tikzpicture}[scale=0.15]
\SetVertexNoLabel
\SetUpEdge[labelstyle={draw}]
\Vertex[x=0,y=0]{01}
\Vertex[x=0,y=10]{11}
\Vertex[x=10,y=10]{00}
\Vertex[x=10,y=0]{10}
\Edge[color=red, style=dashed](10)(11)
\Edge[color=red, style=dashed](00)(01)
\Edge[color=green](00)(10)
\Edge[color=green,style=dashed](01)(11)
\end{tikzpicture} 
& 
\begin{tikzpicture}[scale=0.15]
\SetVertexNoLabel
\SetUpEdge[labelstyle={draw}]
\Vertex[x=0,y=0]{01}
\Vertex[x=0,y=10]{11}
\Vertex[x=10,y=10]{00}
\Vertex[x=10,y=0]{10}
\Edge[color=red, style=dashed](10)(11)
\Edge[color=red, style=dashed](00)(01)
\Edge[color=green,style=dashed](00)(10)
\Edge[color=green](01)(11)
\end{tikzpicture} 
\end{tabular}

\caption{The $8$ row-ordered valise Adinkras for one of the row-ordered valise ARC's for $N=d=2$, obtained by adding a dashing. \label{fig:N=2 dashings}}
\end{center}
\end{figure}

\begin{ex}
Consider $N = 4$ and $d = 4$, which corresponds to $k = 1$. See Figure~\ref{fig:3cube valise} for one such valise Adinkra, with the $1$-dimensional code $C= \{0000, 1111\}$.  One possible choice of coset representatives under this code would be the $16/2 = 8$ bitstrings where the first coordinate is $0$, as reflected in the Figure. As every vertex has $4$ possible bits to change, each vertex has degree $4$, corresponding to one of $4$ colors. If we let the black, red, green, and blue colors correspond to $L_1, L_2, L_3, L_4$ respectively, the corresponding list of generators of $GR(4,4)$ is precisely what we encountered in Example~\ref{ex:matrices}. As an example, if we follow the black edge corresponding to $L_1$ from the fourth vertex on top labeled $0000$, we obtain $1000$. This is not one of our coset representatives, but because $0111 = 1000+1111$, it is in the coset $0111+C$, represented by our second vertex on the bottom. Thus, there is a $1$ in the $(4,2)$ spot of the matrix $L_1$.
\end{ex}
\begin{figure}[htb]
\begin{center}
\begin{tikzpicture}[scale=0.25]
\SetUpEdge[labelstyle={draw}]
\Vertex[x=0,y=0]{0111}
\Vertex[x=0,y=10]{0101}
\Vertex[x=20,y=0]{0010}
\Vertex[x=20,y=10]{0000}
\Vertex[x=-10,y=10]{0110}
\Vertex[x=10,y=10]{0011}
\Vertex[x=-10,y=0]{0100}
\Vertex[x=10,y=0]{0001}
\Edge[color=red,style=dashed](0100)(0101)
\Edge[color=red,style=dashed](0000)(0001)
\Edge[color=red](0010)(0011)
\Edge[color=red](0110)(0111)
\Edge[color=green](0000)(0100)
\Edge[color=green, style=dashed](0001)(0101)
\Edge[color=green](0010)(0110)
\Edge[color=green, style=dashed](0011)(0111)
\Edge[color=blue, style=dashed](0000)(0010)
\Edge[color=blue, style=dashed](0001)(0011)
\Edge[color=blue](0100)(0110)
\Edge[color=blue](0101)(0111)
\Edge[color=black](0100)(0011)
\Edge[color=black](0000)(0111)
\Edge[color=black](0010)(0101)
\Edge[color=black](0110)(0001)
\end{tikzpicture}
\caption{A row-ordered valise Adinkra corresponding to $N=d=4$, with vertex labels to aid the reader. The corresponding list of generators appears in Example~\ref{ex:matrices}. \label{fig:3cube valise}}
\end{center}
\end{figure}
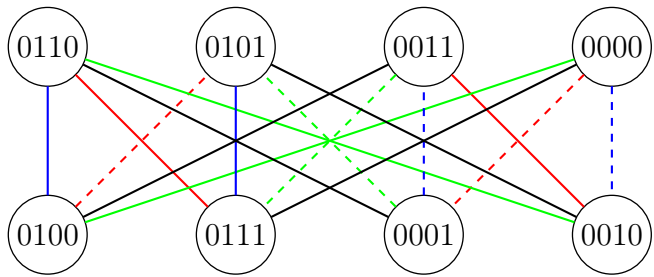

\begin{ex}
Consider $N = 8$ and $d = 8$, which corresponds to $k = 4$. One such code is the extended Hamming code $E_8$, which is the set of $16$ vectors generated by $\{11110000,00111100,00001111,01010101\}$. Then the $2d = 16$ vertices of the chromotopology can be indexed by $\{v+E_8\}$. As there are $2^8$ bitstrings in $\ZZ_2^8$ and $2^4$ bitstrings in $E_8$, we can select $2^{8-4} = 16$ coset representatives. Then we have edges between e.g. $00000000+E_8$ and $11110001+E_8$ because $00000000$ is a representative of the first coset and $11110001 + 11110000 = 00000001$ is a representative of the second coset, and those have Hamming distance $1$ between them. If we were to draw such an adinkra, each vertex would now have $8$ edges incident to it of different colors, as $N=8$.
\end{ex}

\section{Main Theorem and Proof}
\label{sec:proof}

We now present our main results, where we count equivalence classes (under row and column permutation) of sets (resp. unsigned sets) of generators of $GR(d,N)$, which is equivalent to counting $N$-dimensional color-unordered valise Adinkras (resp. ARC's) with $2d$ vertices. We use the ideas from \cite{zhang:adinkras} above along with some elementary combinatorics. Our goal is to reproduce the efforts of \cite{gates-counting} and \cite{randall} using the mathematical foundations we built from \cite{zhang:adinkras} to create a streamlined general approach that avoids case-analysis.

First, define $C(N,k)$ to be the number of doubly-even $(N,k)$ codes. We stress that the enumeration of $C(N,k)$ is known. Thus, one merit of our general approach is encapsulating a difficult, but solved, part of the problem so we do not have to end up repeating some of the work for specific cases. One can see \cite{gaborit1996mass}, which gives different formulae for different cases (we omit all but two cases for relevance to our goals of confirming earlier computations):

\begin{thm}[Theorem 7, \cite{gaborit1996mass}]
\label{thm:gaborit}
For all $N$, $C(N, 0) = 1$. For $1 \leq k \leq n/2$ and $N \geq 4$, we have:
\begin{itemize}
\item If $N = 4 \pmod{8}$, 
\[
C(N,k) = [\prod_{i=0}^{k-2} \frac{2^{N - 2i - 2} - 2^{N/2 - i - 1} - 2}{2^{i+1} - 1}] \cdot [\frac{1}{2^{k-1}} + \frac{2^{N-2k} - 2^{N/2 - k} - 2}{2^k - 1}].
\]
\item If $N = 0 \pmod{8}$, 
\[
C(N,k) = [\prod_{i=0}^{k-2} \frac{2^{N - 2i - 2} + 2^{N/2 - i - 1} - 2}{2^{i+1} - 1}] \cdot [\frac{1}{2^{k-1}} + \frac{2^{N-2k} + 2^{N/2 - k} - 2}{2^k - 1}].
\]
\item If $N = 1, 7 \pmod{8}$, etc.
\end{itemize}
\end{thm}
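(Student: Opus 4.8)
The plan is to recast the count $C(N,k)$ of doubly-even $(N,k)$ codes as a count of totally singular subspaces of an explicit quadratic space over $\FF_2$, and then to invoke the classical enumeration of such subspaces. Let $E \subseteq \FF_2^N$ be the hyperplane of even-weight vectors and define $\gamma \colon E \to \FF_2$ by $\gamma(v) = \tfrac12 \w(v) \bmod 2$, where $\w$ is Hamming weight. The identity $\w(u+v) = \w(u)+\w(v) - 2|u\cap v|$ gives $\gamma(u+v) = \gamma(u) + \gamma(v) + u\cdot v$, so $\gamma$ is a quadratic form on $E$ whose polarization is the standard inner product. A subspace $C$ is a doubly-even code exactly when it is totally singular for $\gamma$ (which forces $C \subseteq E$); such subspaces are automatically self-orthogonal since $u\cdot v = \gamma(u+v)+\gamma(u)+\gamma(v)$. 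Thus $C(N,k)$ is the number of totally singular $k$-dimensional subspaces of $(E,\gamma)$.

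First I would pin down the geometry of $(E,\gamma)$. Since $E = \mathbf{1}^{\perp}$, the radical of the inner product on $E$ is the line $\langle \mathbf{1}\rangle$ for $N$ even, and for $N \equiv 0 \pmod 4$ the all-ones vector is singular, $\gamma(\mathbf{1}) = \tfrac{N}{2} \bmod 2 = 0$. Hence $\gamma$ descends to a nondegenerate quadratic form $\bar\gamma$ on $\bar E := E/\langle\mathbf{1}\rangle$, of even dimension $N-2 = 2m$ with $m = N/2 - 1$. The Witt type $\epsilon \in \{\pm 1\}$ of $\bar\gamma$ is read off from the number $W$ of doubly-even vectors, which the roots-of-unity count gives as $W = \tfrac14\big[2^N + (1+\sqrt{-1})^N + (1-\sqrt{-1})^N\big]$. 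Evaluating this Gauss sum yields $W = 2^{N-2} + 2^{N/2-1}$ for $N \equiv 0 \pmod 8$ and $W = 2^{N-2} - 2^{N/2-1}$ for $N \equiv 4 \pmod 8$; comparing with $2^{N-2} + \epsilon\,2^{N/2-1}$ (twice the number of zeros of $\bar\gamma$, lifting along the radical) identifies $\epsilon = +1$ and $\epsilon = -1$ respectively. This is exactly the dichotomy behind the two displayed formulas.

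The core computation counts totally singular $k$-subspaces of $(E,\gamma)$ from the nondegenerate part $(\bar E, \bar\gamma)$ of type $\epsilon$ together with the singular radical line $\langle\mathbf{1}\rangle$, by splitting according to whether a subspace contains $\mathbf{1}$. Those containing $\mathbf{1}$ correspond bijectively to totally singular $(k-1)$-subspaces of $\bar E$; those avoiding $\mathbf{1}$ map to totally singular $k$-subspaces $\bar U$ of $\bar E$, and a short computation (using $\gamma(\mathbf{1})=0$ and $u\cdot\mathbf{1}=0$ on $E$) shows the full preimage of each $\bar U$ is a totally singular $(k+1)$-space contributing exactly $2^k$ lifts that avoid $\mathbf{1}$. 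Writing $T^{\epsilon}(m,j)$ for the number of totally singular $j$-subspaces of a nondegenerate orthogonal $\FF_2$-space of dimension $2m$ and type $\epsilon$, this yields the master identity $C(N,k) = T^{\epsilon}(m,k-1) + 2^k\,T^{\epsilon}(m,k)$ with $m = N/2-1$. I would then substitute the classical closed form $T^{\epsilon}(m,j) = \prod_{i=0}^{j-1}\frac{(2^{m-i}-\epsilon)(2^{m-i-1}+\epsilon)}{2^{i+1}-1}$ and simplify: factoring the common initial product reproduces the bracketed $\prod_{i=0}^{k-2}$, and combining the two terms over the remaining denominator $2^{k}-1$ produces the final two-term bracket, with the $\tfrac{1}{2^{k-1}}$ summand arising precisely from the radical-versus-nondegenerate split and the sign governed by $\epsilon$.

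The main obstacle is this last step: justifying the closed form for $T^{\epsilon}(m,j)$ and, more importantly, ensuring the per-step extension counts are uniform so the product telescopes. Uniformity is exactly Witt's theorem — the orthogonal group of $(\bar E,\bar\gamma)$ acts transitively on totally singular subspaces of each fixed dimension, so the number of singular vectors available to extend a given totally singular subspace depends only on $(m,j,\epsilon)$, not on the subspace. This also explains why a naive single-type recursion on all of $(E,\gamma)$ fails: the local counts genuinely differ according to whether $\mathbf{1}$ lies in the code, the radical-versus-nondegenerate distinction. With uniformity in hand, $T^{\epsilon}(m,j)$ follows from the standard double-count of flags, dividing by the Gaussian factor $2^{i+1}-1$ at each stage. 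The rest is bookkeeping, which I would anchor with the checks $C(4,1)=1$ and $C(8,1)=71$, $C(8,2)=455$ (the latter matching $35 + 4\cdot 105$ via the master identity), and finally specialize to $N=d=4$ and $N=d=8$ to recover the first row of Figure~\ref{fig:previous}. The odd cases $N \equiv 1,7 \pmod 8$ follow the same template after adjusting for the parity of $\dim E$ and the behavior of $\gamma$ on the radical.
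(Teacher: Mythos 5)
The paper offers no proof of this statement at all: Theorem~\ref{thm:gaborit} is imported verbatim as Theorem~7 of \cite{gaborit1996mass}, so there is nothing internal to compare you against. That said, your blind derivation is correct and essentially complete, and it is the ``right'' proof from the quadratic-forms point of view. The translation of doubly-even codes into totally singular subspaces of $(E,\gamma)$ with $\gamma(v)=\tfrac12\mathrm{wt}(v)\bmod 2$ is accurate (the polarization identity $\gamma(u+v)=\gamma(u)+\gamma(v)+u\cdot v$ holds, and self-orthogonality falls out for free); the identification of the radical $\langle\mathbf{1}\rangle$, the descent to a nondegenerate form on $E/\langle\mathbf{1}\rangle$ of dimension $2m=N-2$, and the determination of the Witt type via the Gauss-sum count $W=2^{N-2}+\epsilon\,2^{N/2-1}$ all check out. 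I verified your master identity $C(N,k)=T^{\epsilon}(m,k-1)+2^k\,T^{\epsilon}(m,k)$ (the $2^k$ complement count of hyperplanes avoiding $\mathbf{1}$ in the $(k+1)$-dimensional saturation is right), and the algebra does reproduce Gaborit's displayed form exactly: since $(2^{m-i}-\epsilon)(2^{m-i-1}+\epsilon)=\tfrac12\bigl(2^{N-2i-2}+\epsilon 2^{N/2-i-1}-2\bigr)$, the factor $T^{\epsilon}(m,k-1)$ contributes the bracketed product divided by $2^{k-1}$, which is precisely where the $1/2^{k-1}$ summand comes from. Your numerical anchors ($C(4,1)=1$, $C(8,1)=71$, $C(8,2)=455$, and implicitly $C(8,4)=30$) all agree with the theorem and with the paper's Examples. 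The only load-bearing ingredient you do not prove from scratch is the classical count $T^{\epsilon}(m,j)=\prod_{i=0}^{j-1}(2^{m-i}-\epsilon)(2^{m-i-1}+\epsilon)/(2^{i+1}-1)$, and you correctly identify Witt's extension theorem as the reason the stepwise extension counts are uniform; that is a standard citation, not a gap. Compared with Gaborit's own treatment (a case-by-case mass-formula computation over the residues of $N$ mod $8$), your route through the orthogonal geometry of $\FF_2^N$ is more uniform and makes the $\pm$ dichotomy conceptually transparent as the Witt type; the price is that you only treat $N\equiv 0,4\pmod 8$, and the odd residues would require handling an odd-dimensional or degenerate-on-the-radical situation as you note. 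One cosmetic remark: the hypothesis ``$1\leq k\leq n/2$'' in the statement should read $N/2$.
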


\begin{thm}
\label{thm:main}
For any $d$ and $N$, there are 
\[
\frac{(d)!(d - 1)! C(N, N-1-\log_2{d})}{N!}
\]
equivalence classes of sets $\{|L_1|, \ldots, |L_N|\}$ under row and column permutation.
\end{thm}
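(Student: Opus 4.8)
The plan is to count the \emph{ordered} objects first and only pass to sets at the very end by dividing by $N!$. By Lemma~\ref{lem:main equivalence} together with the forgetful dictionary of Section~\ref{sec:prelim}, an unsigned list of generators $(|L_1|,\dots,|L_N|)$ is exactly the datum of a row-ordered valise ARC on $2d$ labeled vertices with $N$ labeled colors. Since a valid chromotopology is a simple $N$-regular graph, the $N$ color classes are distinct perfect matchings, so the permutation matrices $|L_1|,\dots,|L_N|$ are pairwise distinct; hence $S_N$ (reordering the list, i.e. permuting the colors) acts freely on unsigned lists, and the number of unsigned \emph{sets} is precisely $\tfrac{1}{N!}$ times the number of unsigned lists. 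It therefore suffices to prove that there are $d!\,(d-1)!\,C(N,\,N-1-\log_2 d)$ unsigned lists.

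First I would organize the unsigned lists according to their underlying color-ordered chromotopology. By the correspondence recalled from \cite{zhang:adinkras}, adinkraizable $N$-dimensional chromotopologies are in bijection with doubly-even $(N,k)$ codes, and such a chromotopology has $2^{N-k}$ vertices; setting $2^{N-k}=2d$ forces $k=N-1-\log_2 d$ (in particular $d$ must be a power of $2$, for otherwise no such objects exist and the count is $0$). Thus there are exactly $C(N,k)$ such chromotopologies, and the problem reduces to counting, for a single fixed chromotopology $G$, how many unsigned lists realize it.

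The heart of the argument is an orbit--stabilizer count for this last quantity. An unsigned list realizing $G$ is precisely a labeling that sends one part of the (connected, hence essentially unique) bipartition of $G$ bijectively onto the $d$ row indices and the other part bijectively onto the $d$ column indices; there are $2\,(d!)^2$ such labelings, the factor $2$ recording which part of the bipartition is taken to be the rows. Two labelings produce the same list of matrices exactly when they differ by a color-preserving automorphism of $G$, so I would next compute that automorphism group explicitly. Translation by any $v\in\ZZ_2^N/C$ is color-preserving and yields $2^{N-k}=2d$ automorphisms, while any color-preserving automorphism fixing a single vertex must fix each of its neighbors (one per color) and hence, by connectivity, be the identity; thus the automorphism group is exactly these $2d$ translations and it acts freely on the set of labelings. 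Orbit--stabilizer then gives $\tfrac{2\,(d!)^2}{2d}=d!\,(d-1)!$ unsigned lists per chromotopology, so $d!\,(d-1)!\,C(N,k)$ unsigned lists in all, and the claimed formula follows after dividing by $N!$.

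I expect the only genuine obstacle to be the exact determination of the color-preserving automorphism group together with the attendant bookkeeping: one must verify that it is \emph{precisely} the group of $2d$ translations, with no extra color-preserving symmetries, and one must track the orientation factor $2$ consistently so that the orbit count comes out to $d!\,(d-1)!$ rather than being off by a power of $2$. The freeness of the automorphism action on labelings, the distinctness of the $|L_i|$ (needed for the free $S_N$ action), and the power-of-$2$ constraint on $d$ are by comparison routine checks that I would fold in as short remarks.
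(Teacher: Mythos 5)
Your proof is correct and follows essentially the same route as the paper's: fix one of the $C(N,k)$ doubly-even codes, count the $2\,(d!)^2$ choices of row-ordered valise structure, and divide by the $2d$-fold vertex symmetry and by $N!$. Your explicit identification of the color-preserving automorphism group with the $2d$ translations by $\ZZ_2^N/C$ (together with the freeness checks) simply makes rigorous what the paper phrases informally as ``there's a $2d$-fold symmetry on which vertex we would have labeled $00\cdots 00+C$.''
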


\begin{proof}
By our work in Section~\ref{sec:prelim} stemming from Lemma~\ref{lem:main equivalence}, this is equivalent to counting color-unordered valise ARC's with $N$ colors and $2d$ vertices. This means the ARC is $(N,k)$ with $2d = 2^{N-k}$, so $k = N-1-\log_2(d)$ is the dimension of the code. There are $C(N,k)$ doubly-even codes; fix such a double-even code $C$, which gives a chromotopology. Our strategy is to first count row-ordered valise ARC's with this chromotopology under a particular labeling scheme, then divide out by the symmetries.

Now, we can pick a single one of the $2d$ vertices to be labeled $00\cdots 00 + C$. Each permutation of the $N$ colors corresponds to an assignment of one of the $N$ indices to each color. This uniquely determines the label of all the other vertices, defining e.g. $10\cdots 00 + C$ to be the vertex connected to $00\cdots 00 + C$ via the first color, $01\cdots 00 + C$ to be the vertex connected via the second, and so forth.

We now have to pick the vertices to put on the top and the bottom of the valise. There are $2$ choices of whether $00\cdots 00 + C$ would be on the top or the bottom. Afterwards, there are $d!$ ways of arranging the vertices on the top and $d!$ ways of arranging the vertices on the bottom, which we must account for as we are counting row-ordered ARC's.

However, the final ARC does not know about these labels. As there are $2d$ labels, there's a $2d$-fold symmetry on which vertex we would have labeled $00\cdots 00 + C$, so we have to divide the answer by $2d$. As there are $N!$ ways to permute the colors, which we do not care about, we should also divide by $N!$. This gives a total of

\[
\frac{ C(N,k) (2) (d!) (d!)}{2d N!} =  \frac{(d)!(d - 1)! C(N, k)}{N!}
\]
configurations, as desired.
\end{proof}

We now wish to count sets of actual generators $\{L_1, \ldots, L_N\}$, not just unsigned sets of generators. This corresponds to allowing negative signs for the matrices and allowing dashings for the ARC's (i.e. turning them into Adinkras). Luckily, this is easy thanks to our previous result from another paper, which we proved in \cite{zhang:adinkras} with some elementary algebraic topology:
\begin{thm}[Theorem 5.8, \cite{zhang:adinkras}]
\label{thm:general dashings}
The number of dashings of an $(N,k)$-chromotopology $A$ is $2^{2^{N-k}+k-1}$.
\end{thm}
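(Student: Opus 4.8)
The plan is to recast a dashing as a solution of an inhomogeneous system of $\ZZ_2$-linear equations and to count the solutions by computing the dimension of the associated solution space via mod-$2$ simplicial cohomology. Build a $2$-complex $A$ from the chromotopology by gluing a $2$-cell along each $2$-colored $4$-cycle, and let $C^0 = \ZZ_2^V$, $C^1 = \ZZ_2^E$, $C^2 = \ZZ_2^F$ be the cochain groups on the vertices, edges, and $2$-colored $4$-cycles, with coboundary maps $\delta^0, \delta^1$. A dashing is exactly a cochain $d \in C^1$ with $\delta^1 d = \mathbf{1}$, where $\mathbf{1} \in C^2$ is the all-ones cochain, since $(\delta^1 d)(f) = \sum_{e \in f} d(e)$ must equal $1$ on every face $f$. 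Because every $(N,k)$-chromotopology is adinkraizable (it corresponds to a doubly-even code and underlies a genuine Adinkra), at least one dashing exists, so the affine system is consistent; its solution set is then a coset of $\ker \delta^1$, and the number of dashings is $2^{\dim \ker \delta^1}$. It remains to compute $\dim \ker \delta^1$.

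First I would peel off the ``easy'' part of the kernel. The image of $\delta^0$ lies in $\ker \delta^1$ (as $\delta^1\delta^0 = 0$) and has a concrete description: $\delta^0$ sends the indicator of a vertex $v$ to the sum of the edges at $v$, i.e.\ a ``vertex switch,'' and flipping all edges at $v$ changes the parity of every $4$-cycle by $0$ since each face meets $v$ in an even number of edges. Since $A$ is connected, $\dim \ker \delta^0 = \dim H^0 = 1$, so $\dim\, \mathrm{im}\, \delta^0 = |V| - 1 = 2^{N-k} - 1$. Writing $H^1 = \ker \delta^1 / \mathrm{im}\, \delta^0$, we get $\dim \ker \delta^1 = (2^{N-k}-1) + \dim H^1$, so the theorem reduces to the claim $\dim H^1(A;\ZZ_2) = k$.

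For the cohomology I would use a covering-space argument. The $2$-colored $4$-cycles of the Hamming $N$-cube are exactly its square $2$-faces, so the analogous $2$-complex $X$ built on the $N$-cube is the $2$-skeleton of the solid cube $[0,1]^N$; since that cube is contractible and $\pi_1$ depends only on the $2$-skeleton, $X$ is simply connected. By the coset description of the chromotopology, $A$ is the quotient of $X$ by the translation action of the code $C \cong \ZZ_2^k \subseteq \ZZ_2^N$. This action is free on cells: a nonzero codeword has Hamming weight divisible by $4$, hence cannot equal any of the differences $e_i, e_j, e_i+e_j$ among the vertices of a square, so it fixes no vertex, edge, or $2$-face. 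Thus $X \to A$ is a regular covering with deck group $C$, and simple-connectedness of $X$ identifies it as the universal cover, giving $\pi_1(A) \cong C \cong \ZZ_2^k$. Consequently $H_1(A;\ZZ)$ is the abelianization $\ZZ_2^k$, and by universal coefficients $\dim H^1(A;\ZZ_2) = \dim \mathrm{Hom}(\ZZ_2^k, \ZZ_2) = k$.

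Combining the two computations, $\dim \ker \delta^1 = 2^{N-k} - 1 + k$, so the number of dashings is $2^{2^{N-k}+k-1}$, as claimed. I expect the main obstacle to be the $H^1$ computation---specifically, verifying that $X$ captures exactly the $2$-colored $4$-cycles, that the translation action is free (which is precisely where the doubly-even hypothesis is used), and that the quotient is genuinely the complex $A$ and not merely homotopy equivalent to it. The switching and connectivity bookkeeping of the second step, together with consistency of the affine system, is routine by comparison.
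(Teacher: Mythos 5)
The paper does not prove this statement here---it is imported verbatim from \cite{zhang:adinkras}, where it is established by exactly the kind of ``elementary algebraic topology'' you use: realizing dashings as a torsor over the $1$-cocycles of the $2$-complex built on the $2$-colored $4$-cycles, and computing $H^1 \cong \ZZ_2^k$ by exhibiting the chromotopology as the quotient of the simply connected $2$-skeleton of the $N$-cube by the free translation action of the doubly-even code. Your argument is correct and essentially the same as the cited proof; the only external input worth flagging explicitly is the existence of at least one dashing (adinkraizability of doubly-even-code chromotopologies), which you correctly invoke rather than derive.
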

The main insight this result gives is that after a $k$ is selected, the number of dashings for all chromotopologies with the same $k$ are the same. Thus, we obtain: 
\begin{cor}
\label{cor:main}
For any $d$ and $N$, there are 
\[
\frac{(d)!(d - 1)! C(N, N-1-\log_2{d})2^{2d + N-2-\log_2(d)}}{N!}
\]
equivalence classes of sets $\{L_1, \ldots, L_N\}$ under row and column permutation. 
\end{cor}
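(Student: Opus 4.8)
The plan is to combine Theorem~\ref{thm:main} with Theorem~\ref{thm:general dashings}, exploiting the dictionary from Section~\ref{sec:prelim}: unsigned sets of generators correspond to color-unordered valise ARC's, whereas signed sets correspond to color-unordered valise Adinkras, and an Adinkra is exactly an ARC together with a dashing. So the first step is to reduce, as in the proof of Theorem~\ref{thm:main}, to counting $N$-dimensional color-unordered valise Adinkras with $2d = 2^{N-k}$ vertices, where $k = N - 1 - \log_2 d$. The goal is then to show that passing from ARC's to Adinkras multiplies the count of Theorem~\ref{thm:main} by the per-chromotopology number of dashings.

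Concretely, I would re-run the rigidified count underlying Theorem~\ref{thm:main} with one extra choice inserted. Fixing a doubly-even code $C$ (there are $C(N,k)$ of them) and the rigidifying data (an origin vertex, a color-to-index assignment, a top/bottom choice, and the two row orderings) exactly as before, I now additionally record a dashing of the resulting chromotopology. By Theorem~\ref{thm:general dashings} there are exactly $2^{2^{N-k}+k-1} = 2^{2d+k-1}$ such dashings, and the crucial feature is that this number depends only on $k$ (equivalently, only on $N$ and $d$), not on the particular code or labeling. Hence the dashing enters the rigidified tally as a single constant multiplicative factor $2^{2d+k-1}$, turning the rigidified count $C(N,k)\,(2)(d!)(d!)$ into $C(N,k)\,(2)(d!)(d!)\,2^{2d+k-1}$.

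The step I expect to be the only real obstacle is checking that the division by $2d\cdot N!$ remains exact after adding dashings, i.e. that the relabeling symmetry still acts with all orbits of full size $2d\cdot N!$ on the dashed objects. This follows from the proof of Theorem~\ref{thm:main}: that argument already uses that the symmetry acts freely on the rigidified ARC's, and the map ``forget the dashing'' from rigidified Adinkras to rigidified ARC's is equivariant for this symmetry. Thus any relabeling fixing a rigidified Adinkra would fix its underlying rigidified ARC and hence be trivial, so the action on the dashed objects is again free and the divisor is unchanged. Dividing the enlarged rigidified count by $2d\cdot N!$ therefore gives
\[
\frac{C(N,k)\,(2)(d!)(d!)\,2^{2d+k-1}}{2d\,N!} = \frac{d!\,(d-1)!\,C(N,k)\,2^{2d+k-1}}{N!},
\]
and substituting $k = N-1-\log_2 d$, so that $2d + k - 1 = 2d + N - 2 - \log_2 d$, yields exactly the formula of Corollary~\ref{cor:main}.
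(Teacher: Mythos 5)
Your proof is correct and takes essentially the same approach as the paper: the paper likewise derives the corollary by multiplying the count of Theorem~\ref{thm:main} by the per-chromotopology dashing count of Theorem~\ref{thm:general dashings}, using the fact that the latter depends only on $k$. Your extra verification that the relabeling symmetry still acts freely once dashings are added goes beyond the paper's one-line ``simply multiply'' justification, but it only shores up the same argument.
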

\begin{proof}
This time, we are interested in $N$-dimensional color-unordered valise Adinkras (instead of ARC's) with $2d$ vertices. As Adinkras are just ARC's with a dashing, we simply multiply with the results of Theorems~\ref{thm:general dashings} and \ref{thm:main}.
\end{proof}

We now check that we indeed recover the counts of our examples and of previous research. 

\begin{ex}
For $N = d = 2$, we have $k = 0$. Theorem~\ref{thm:gaborit} (or direct observation) produces $C(2,0) = 1$. Thus, we get $(2!)(1!)/(2!) = 1$ set of $\{|L_1|, |L_2|\}$, as we saw previously in Example~\ref{ex:N=2}. Multiplying by $2^{4+2-2-1} = 8$, we get $8$ sets of $\{L_1, L_2\}$ of dashings via Corollary~\ref{cor:main} (or directly, via Theorem~\ref{thm:general dashings}), which matches Figure~\ref{fig:N=2 dashings}.  
\end{ex}

\begin{ex}
\label{ex:N=4 final}
For $N = d = 4$, we have $k = 1$. Theorem~\ref{thm:gaborit} produces $C(4,1) = (\frac{1}{1} + \frac{2^2 - 2^1 - 2}{1}) = 1$ doubly-even code. Thus, we get $(4!) (3!)/4! = 6$ equivalence classes of  sets of $\{|L_1|, \ldots, |L_4|\}$. Multiplying by $2^{8+4-2-2} = 2^{8}$, we obtain $1536$ equivalence classes of sets of $\{L_1, \ldots, L_4\}$ via Corollary~\ref{cor:main}.
\end{ex}

\begin{ex}
\label{ex:N=8 final}
For $N = d = 8$, we have $k = 4$. Theorem~\ref{thm:gaborit} produces 
\[
C(8,4) = (\frac{2^{8-2} + 2^{4-1} - 2}{2-1}) (\frac{2^{8-4} + 2^{4-2} - 2}{2^2-1})(\frac{2^{8-6} + 2^{4-3} - 2}{2^3-1})(\frac{1}{8} + \frac{1 + 1 - 2}{15}) = 30.
\] 
Thus, we get $(8!)(7!)(30)/8! = 151200$ equivalence classes of sets of $\{|L_1|, \ldots, |L_8|\}$. Multiplying by $2^{16+8-2-3} = 2^{19}$, we obtain $79272345600$ equivalence classes of sets of $\{L_1, \ldots, L_8\}$.
\end{ex}

The numbers in Examples~\ref{ex:N=4 final} and \ref{ex:N=8 final} indeed match previous work in \cite{gates-counting}, as seen in Figure~\ref{fig:previous}.

\section*{Acknowledgments}
We wish to thank the authors of \cite{gates-counting}, namely Sylvester Gates, Tristan H\"ubsch, Kevin Iga, and Steven Mendez-Diez, for introducing the problem to us and providing helpful discussion in the spirit of collaborative mathematical discovery, despite using ``competing'' methods! We especially thank Kevin Iga for pointing us to Gaborit's work \cite{gaborit1996mass}.

\bibliographystyle{abbrv}
\bibliography{adinkras}

\end{document}